\title{Full box spaces of free groups}
\author{Thiebout Delabie \footnote{\texttt{Supported by grant 200021\_163417 of the Swiss National Science Foundation}}}
\date{\today}
\newtheorem{theorem}{Theorem}[section]
\newtheorem{lemma}[theorem]{Lemma}
\newtheorem{proposition}[theorem]{Proposition}
\newtheorem{corollary}[theorem]{Corollary}
\newtheorem{definition}[theorem]{Definition}
\newcommand{\diam}[2][]{\operatorname{diam}_{#1}\left(#2\right)}
\newcommand{\Span}[1]{\operatorname{span}\left(#1\right)}
\newcommand{\Aut}{\operatorname{Aut}}
\newcommand{\Id}{\operatorname{Id}}
\renewcommand{\Im}{\operatorname{Im}}
\newcommand{\dom}{\operatorname{dom}}
\newcommand{\Alln}{$n\ge 3$}
\newcommand{\AllN}{$n\in\N$}
\newcommand{\N}{\mathbb{N}}
\newcommand{\Z}{\mathbb{Z}}
\newcommand{\R}{\mathbb{R}}
\newcommand{\Boxf}{\Box_f}
\newcommand{\norm}[1]{\left\|#1\right\|}
\newcommand{\mingen}{minimal generating set}
\renewcommand{\O}{\mathcal{O}}
\begin{document}

\maketitle

\begin{abstract}
In this paper we investigate full box spaces and coarse equivalences between them. We do this in two parts. In part one we compare the full box spaces of free groups on different numbers of generators. In particular the full box space of a free group $F_k$ is not coarsely equivalent to the full box space of a free group $F_d$, if $d\ge 8k+10$. In part two we compare $\Boxf\Z^n$ to the full box spaces of $2$-generated groups. In particular we prove that the full box space of $\Z^n$ is not coarsely equivalent to the full box space of any $2$-generated group, if \Alln.
\end{abstract}

\section{Introduction}
Given a finitely generated group $G$ we can consider a collection of finite index normal subgroups $(N_i)_i$, and we can create the metrized disjoint union of all $G/N_i$. The metric of $\coprod G/N_i$ is defined as follows: $d(x,y) = d_{G/N_i}(x,y)$ if $x,y\in G/N_i$ and $d(x,y)=\diam{G/N_i}+\diam{G/N_j}$ if $x\in G/N_i$ and $y\in G/N_j$ with $i\not= j$. Note that $G/N_i$ is endowed with the word metric coming from a fixed finite generating set $S$ in $G$.
\\
If $(N_i)_i$ is a decreasing sequence of finite index normal subgroups of $G$ with trivial intersection, then the metrized disjoint union $\coprod G/N_i$ is called a box space of $G$, denoted by $\Box_{N_i}G$. Note that this only exists if $G$ is residually finite.
\\
In this paper we will mainly be concerned with the full box space $\Boxf G$ of $G$, which is the metrized disjoint union of all finite quotients of $G$. We will study these spaces up to coarse equivalence:
\begin{definition}
Let $(X,d_X)$ and $(Y,d_Y)$ be metric spaces. Then a map $f\colon X\to Y$ is a coarse equivalence if $f(X)$ is $C$-dense in $Y$ for some constant $C$ and
\[d_X(x_n,y_n)\to +\infty \Longleftrightarrow d_Y(f(x_n),f(y_n))\to +\infty\]
for any two sequences $(x_n)_n$ and $(y_n)_n$ in $X$.
\end{definition}
An interesting way of thinking about a coarse equivalence between box spaces follows from Lemma 1 of \cite{AA}.
\begin{lemma}[Khukhro, Valette]\label{AA1}
Let $\Phi\colon \Box_{N_i}G\to \Box_{M_i}H$ be a coarse equivalence between the box spaces of the residually finite finitely generated groups $G$ and $H$. Then there exists a constant $A$ and an almost permutation $\phi$ between the components of $\Box_{N_i}G$ and the components of $\Box_{M_i}H$ such that $\Phi\vert_{G/N_i}$ is an $(A,A)$-quasi-isometry between $G/N_i$ and $\phi(G/N_i)$.
\end{lemma}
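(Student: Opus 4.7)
The plan is to exploit that distinct components of a box space are metrically separated by the sum of their diameters, a quantity tending to $\infty$. I would first fix a coarse inverse $\Psi$ with $d(\Psi\Phi(x),x),\,d(\Phi\Psi(y),y)\le C$, and non-decreasing control functions $\rho_\pm,\sigma_\pm$ with $\rho_-,\sigma_-$ proper, giving $\rho_-(d(x,y))\le d(\Phi x,\Phi y)\le \rho_+(d(x,y))$ and the analogue for $\Psi$. Since $G,H$ are finitely generated by fixed sets, the edge constants $A_\Phi:=\rho_+(1)$ and $A_\Psi:=\sigma_+(1)$ are finite; these will provide the Lipschitz control on each component.

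The crucial step is to show that for cofinitely many $i$, $\Phi(G/N_i)$ lies in a single component $H/M_{\phi(i)}$. Let $J:=\{j:\diam{H/M_j}\le A_\Phi\}$, a finite set since $\diam{H/M_j}\to\infty$. If $x\sim y$ in $G/N_i$ and $\Phi(x)\in H/M_j$ with $j\notin J$, then $d(\Phi x,\Phi y)\le A_\Phi<\diam{H/M_j}$ forces $\Phi(y)\in H/M_j$ as well, since otherwise the inter-component distance $\diam{H/M_j}+\diam{H/M_k}$ would exceed $A_\Phi$. Propagating along paths in the connected graph $G/N_i$ yields $\Phi(G/N_i)\subseteq H/M_j$, provided at least one point of $G/N_i$ escapes the finite union $\bigcup_{j\in J}H/M_j$; this holds for $i$ large, because $|G/N_i|\to\infty$ whereas $\Phi$ has fibres of uniformly bounded size (two preimages of the same point differ by at most $2C$, thanks to $\Psi\Phi\approx\Id$).

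I would then define $\phi(i)$, and symmetrically $\psi$ from $\Psi$, on cofinite index sets. The bound $d(\Psi\Phi x,x)\le C$, combined with inter-component separation, forces $\psi\circ\phi=\Id$ cofinitely, so $\phi$ is an almost permutation. The same separation trick shows $\Phi(G/N_i)$ is $C$-dense in $H/M_{\phi(i)}$: any $y\in H/M_{\phi(i)}$ admits a $C$-close $\Phi(z)$, and $z$ cannot sit in a different component $G/N_k$ of $\Box_{N_i}G$ without incurring a forbidden inter-component jump of size at least $\diam{G/N_i}+\diam{G/N_k}$.

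Finally, I would upgrade to linear bounds on each component: summing $A_\Phi$ along a geodesic in $G/N_i$ gives $d(\Phi x,\Phi y)\le A_\Phi\,d(x,y)$, while $d(x,y)\le 2C+d(\Psi\Phi x,\Psi\Phi y)\le 2C+A_\Psi\,d(\Phi x,\Phi y)$ supplies the matching lower bound, so taking $A:=\max(A_\Phi,A_\Psi,2C)$ yields an $(A,A)$-quasi-isometry. I expect the main obstacle to be the propagation step: ruling out that a large component $G/N_i$ dumps entirely into the handful of small components of $\Box_{M_i}H$. Bounded multiplicity of $\Phi$ together with the growth of $|G/N_i|$ is what rules this out, and uniformity of $A_\Phi,A_\Psi,C$ in $i$ is what ensures a single constant $A$ works across the whole cofinite family.
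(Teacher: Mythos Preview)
The paper does not prove this lemma; it is quoted without proof as Lemma~1 of \cite{AA}. Your sketch is essentially the standard argument and is correct: the inter-component separation $\diam{G/N_i}+\diam{G/N_j}\to\infty$ forces $\Phi$ to respect components cofinitely, the coarse inverse makes the induced map on components an almost permutation, and the edge-by-edge estimate along geodesics in a Cayley graph upgrades the coarse bounds to affine ones with a uniform constant.

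One minor simplification for the density step: rather than picking an arbitrary $\Phi$-preimage of a nearby point and arguing that it cannot lie in a different component, set $z:=\Psi(y)$ directly. Then for cofinitely many $i$ you have $z\in G/N_{\psi(\phi(i))}=G/N_i$ and $d(\Phi z,y)=d(\Phi\Psi y,y)\le C$, which gives $C$-density of $\Phi(G/N_i)$ in $H/M_{\phi(i)}$ without any case analysis on the exceptional small components. Also, be careful to distinguish the close-tracking constant for $\Psi\Phi,\Phi\Psi$ from the density constant of $\Im\Phi$; in your write-up both are called $C$.
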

Note that an almost permutation between sets $A$ and $B$ is a bijection between a co-finite subset of $A$ and a co-finite subset of $B$.\\
Since $G/N_i$ and $\phi(G/N_i)$ are $(A,A)$-quasi-isometric we have that both the diameter and the order of $G/N_i$ and $\phi(G/N_i)$ are quite similar.
In \cref{F} we will compare the full box spaces of the free groups, using the similarity of the order, i.e. we will compare the normal subgroup growth of different free groups.
\begin{theorem}\label{free}
Let $2\le k \le d$ with $2(k+1)<\frac{(d-1)^2}{4d}$. Then $\Boxf F_d$ is not coarsely equivalent to $\Boxf F_k$.
\end{theorem}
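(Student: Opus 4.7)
The plan is to derive a contradiction from counting normal subgroups of $F_d$ and $F_k$. Applying Lemma~\ref{AA1} to a hypothetical coarse equivalence $\Phi\colon\Boxf F_d\to\Boxf F_k$ yields an almost permutation $\phi$ between the finite quotients with each $\Phi|_{F_d/N}$ an $(A,A)$-quasi-isometry to $F_k/\phi(N)$. A standard ball-comparison argument shows that an $(A,A)$-quasi-isometry between finite vertex-transitive Cayley graphs forces the cardinalities to agree up to a multiplicative constant $C=C(A,d,k)$, so $\phi$ maps normal subgroups of index $n$ to normal subgroups of index in $[n/C,Cn]$. Writing $s^{\triangleleft}_n(G)$ for the number of normal subgroups of index $n$ in $G$, we obtain
\[
s^{\triangleleft}_n(F_d)\le\sum_{n/C\le n'\le Cn}s^{\triangleleft}_{n'}(F_k)+F
\]
for some finite exception term $F$ coming from the almost-bijection.

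To lower-bound the $F_d$-side I would use elementary abelian quotients: for each prime $p$ and each $1\le m\le d$, the Gaussian binomial coefficient $\binom{d}{m}_p\ge p^{m(d-m)}$ counts the normal subgroups $N\triangleleft F_d$ with $F_d/N\cong(\Z/p\Z)^m$. Choosing $m$ and $p$ so that $p^m$ is comparable to $n$ and optimizing the exponent $m(d-m)$ against the constraint $m\log_p n$ produces a lower bound of the form $s^{\triangleleft}_n(F_d)\ge n^{(d-1)^2/(4d)}$ along an infinite sequence of $n$; the specific constant $(d-1)^2/(4d)$ emerges as the natural maximum when balancing the bilinear exponent against the index. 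To upper-bound the $F_k$-side I would use that each normal subgroup $M\triangleleft F_k$ of index $n$ is determined by an epimorphism $F_k\twoheadrightarrow G$ with $|G|=n$ and $d(G)\le k$, modulo $\Aut(G)$; bounding $|\mathrm{Epi}(F_k,G)|\le n^k$ together with a polynomial count of $k$-generated finite groups of a given order yields $\sum_{n/C\le n'\le Cn}s^{\triangleleft}_{n'}(F_k)\le(Cn)^{2(k+1)}$ in the relevant range.

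Combining the two bounds, one obtains $n^{(d-1)^2/(4d)}\le(Cn)^{2(k+1)}$ for an infinite family of $n$; under the hypothesis $2(k+1)<(d-1)^2/(4d)$ this fails for $n$ sufficiently large, the desired contradiction. The main obstacle is establishing the two bounds with the precise leading constants $(d-1)^2/(4d)$ and $2(k+1)$: the former requires carefully selecting the optimal pair $(p,m)$ with $p^m$ near $n$ in the Gaussian binomial bound, while the latter requires both a tight Schreier-style estimate on $|\mathrm{Epi}(F_k,G)|$ and a polynomial bound on the number of $k$-generated finite groups of a given order. Ensuring that both bounds can be applied simultaneously along a single cofinal family of indices $n$ is the secondary technical hurdle.
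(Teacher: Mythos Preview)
Your overall architecture is the same as the paper's: apply Lemma~\ref{AA1}, convert the almost permutation into a comparison of normal subgroup counts at comparable indices, and derive a contradiction from asymptotic bounds on $a_n^\lhd(F_d)$ versus $a_n^\lhd(F_k)$. The gap is in the orders of magnitude of the two bounds you propose.

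\textbf{The upper bound is false as stated.} You claim $\sum_{n/C\le n'\le Cn}a_{n'}^\lhd(F_k)\le (Cn)^{2(k+1)}$, based on a ``polynomial bound on the number of $k$-generated finite groups of a given order''. No such polynomial bound exists: already for $p$-groups, the number of groups of order $p^m$ is $p^{(2/27+o(1))m^3}$, and the $k$-generated ones among them still number $p^{\Theta(m^2)}$, i.e.\ $n^{\Theta(\log n)}$. The correct upper bound (Lubotzky--Segal, Theorem~2.6 and Lemma~2.5) is $a_i^\lhd(F_k)\le i^{k}\cdot i^{2(k+1)\log_2 i}$, which is of type $n^{c\log n}$, not $n^c$. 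The constant $2(k+1)$ you wrote down is right, but it is the coefficient of $(\log_2 n)^2$ in the exponent of $2$, not a polynomial exponent.

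\textbf{The lower bound via elementary abelian quotients does not survive $n\to\infty$.} Gaussian binomials give $\binom{d}{m}_p\approx p^{m(d-m)}$ normal subgroups with quotient $(\Z/p)^m$, but this requires $m\le d$; for $n=p^m$ with $m>d$ there are no elementary abelian quotients at all, so the construction produces nothing along any cofinal family of indices. The paper instead invokes Lubotzky--Segal, Theorem~3.7, which (using genuinely non-abelian $p$-group quotients) gives $a_{2^m}^\lhd(F_d)\ge 2^{cm^2}$ for every $c<\frac{(d-1)^2}{4d}$ and all large $m$. Again the constant $\frac{(d-1)^2}{4d}$ is the coefficient of $(\log_2 n)^2$, not a polynomial exponent.

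Once both bounds are placed on the correct $n^{c\log n}$ scale, your comparison becomes $2^{c_1(\log_2 n)^2}\le 2^{c_2(\log_2 n)^2+O(\log n)}$ with $c_1$ arbitrarily close to $\frac{(d-1)^2}{4d}$ and $c_2=2(k+1)$, and the hypothesis $2(k+1)<\frac{(d-1)^2}{4d}$ gives the contradiction exactly as in the paper. So the strategy is right, but both inputs must be taken from the Lubotzky--Segal subgroup-growth theorems rather than from the elementary estimates you sketch.
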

In \cref{Z} we will use the similarity of diameter to prove that $\Boxf\Z^n$ is not coarsely equivalent to the full box space of a $2$-generated group.
\begin{theorem}\label{mainZn}
Let {\Alln} and let $H$ be a $2$-generated group. Then $\Boxf H$ is not coarsely equivalent to $\Boxf \Z^n$.
\end{theorem}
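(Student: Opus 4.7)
The plan is to assume a coarse equivalence $\Phi\colon\Boxf\Z^n\to\Boxf H$ exists and derive a contradiction by comparing normal-subgroup growth rates of $\Z^n$ and $H$. By \cref{AA1}, there are a constant $A$ and an almost permutation $\phi$ of the components of the two box spaces such that each $\Phi|_{\Z^n/N}\colon\Z^n/N\to H/\phi(N)$ is an $(A,A)$-quasi-isometry.

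First I would apply the correspondence to the sequence of standard tori $\Z^n/k\Z^n\cong(\Z/k\Z)^n$, of cardinality $k^n$ and diameter $\lfloor nk/2\rfloor$. The matched quotient $H/M_k := H/\phi(k\Z^n)$ then has cardinality $\asymp k^n$ and diameter $\asymp k$, so the $1$-Lipschitz projection $H\to H/M_k$ gives $|B_H(C_A k)|\geq|H/M_k|\gtrsim k^n$, hence $H$ has polynomial growth of degree at least $n\geq 3$. By Gromov's theorem $H$ is virtually nilpotent, and by Bass--Guivarc'h of integer growth degree $d\geq n$. Transferring the injectivity radius $\lfloor k/2\rfloor$ of $(\Z/k\Z)^n$ through the $(A,A)$-quasi-isometry---balls of radius $\sim k/A$ in $H/M_k$ are then $(A,A)$-quasi-isometric to Euclidean $\Z^n$-balls---and lifting to $H$ with $k\to\infty$, $H$ itself is quasi-isometric to $\Z^n$. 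Pansu's rigidity theorem then forces $H$ to be virtually $\Z^n$, so $H$ sits in an extension $1\to\Z^n\to H\to F\to 1$ with $F$ finite. Since $H$ is $2$-generated, $\operatorname{rank}(H^{\mathrm{ab}})\leq 2$; a trivial $F$-action on $\Z^n$ would make $\Z^n$ central and force $\operatorname{rank}(H^{\mathrm{ab}})\geq n\geq 3$, so the action is nontrivial and the $F$-coinvariants of $\Z^n$ have rank at most $2<n$.

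The last step is a counting obstruction. The number of finite-index normal subgroups of $\Z^n$ of index $\leq N$ grows like $N^n$, while the analogous count for $H$ is bounded above by $|F|$ times the number of $F$-invariant sublattices of $\Z^n$ of index $\leq N$; since the trivial isotypic of $\Z^n$ under $F$ has rank at most $2$, this quantity is polynomial in $N$ of degree strictly less than $n$. The almost permutation $\phi$, together with the $A$-bounded distortion of cardinality across the quasi-isometries, forces these two counts to agree up to constants in index bands of fixed logarithmic width, and the $N^n$-versus-$N^{<n}$ discrepancy provides the contradiction. The hardest step I expect is the middle one: rigorously transferring the injectivity radius of the torus through a merely coarse equivalence and lifting to $H$ to conclude $H$ is quasi-isometric to $\Z^n$ (so that Pansu rigidity applies), together with the uniform polynomial bound of degree $<n$ on $F$-invariant sublattices for an arbitrary faithful finite action with trivial isotypic of rank at most $2$.
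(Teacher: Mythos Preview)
Your overall architecture matches the paper's---reduce to $H$ virtually abelian via rigidity, then derive a counting contradiction---but you count by \emph{index} whereas the paper counts by \emph{diameter}: it shows $\#\{N\lhd\Z^n:\diam{\Z^n/N}\le k\}=\Omega(k^{n^2})$ while for a $2$-generated $\Z^m$-by-finite $H$ one has $\#\{N\lhd H:\diam{H/N}\le k\}=O(k^{m^2-1})$, the latter via a delicate ``minimal generating set'' analysis of lattices (\cref{bound}, \cref{growH}) together with a bounded-fibre result (\cref{V}, \cref{growext}) counting how many $N\lhd H$ share a given $N\cap\Z^n$. Your index-based route is an interesting alternative and would, if it worked, be somewhat lighter; the paper's diameter route yields a larger exponent gap ($n^2$ vs.\ $n^2-1$) at the cost of substantial lattice geometry. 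For the rigidity step the paper uses convergence in the space of marked groups (a generalisation of \cite{AA}, Theorem~7) rather than your Gromov--Pansu sketch; the latter is plausible but your ``lift the injectivity radius to $H$'' step needs care, since a priori you only control balls in $H/M_k$, not in $H$.

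There is, however, a genuine gap in your counting step. You argue that because the trivial $F$-isotypic in $\Z^n$ has rank at most $2$, the number of $F$-invariant sublattices of index $\le N$ has polynomial degree strictly below $n$. This implication is false: take $F=\Z/2$ acting on $\Z^n$ by $-\Id$. The trivial isotypic is $0$, the coinvariants have rank $0$, yet \emph{every} sublattice is $F$-invariant, so the count is $\sim N^n$. What is actually needed---and what the paper exploits in \cref{growH}---is an element $h$ with $\alpha_h\neq\pm\Id$; your argument only excludes the trivial action, not the $-\Id$ action. It happens that for $n\ge 3$ a $2$-generated $\Z^n$-by-finite $H$ cannot have conjugation acting through $\{\pm\Id\}$ (roughly because $H^{ab}\otimes\Z/2$ would then surject onto $(\Z/2)^n$), but you have not shown this, and even granting it you would still need to prove that $\alpha$-invariant sublattices for a single finite-order $\alpha\neq\pm\Id$ grow like $O(N^{n-1})$---an isotypic/Dedekind-zeta computation absent from your proposal. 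You also need the bounded-fibre statement that each $F$-invariant sublattice $L\subset\Z^n$ arises as $N\cap\Z^n$ for only $O(1)$ many $N\lhd H$; this is not ``bounded by $|F|$'' as you write but requires an argument like \cref{V}.
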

The most notable thing about the proofs of these theorems is that we do not show that the components of the box spaces are different, we show that one of the box spaces has too many small components compared to the other. In other words, it is still open whether for every box space $\Box_{N_i} F_d$, there exists a box space $\Box_{M_i} F_k$, which is coarsely equivalent to $\Box_{N_i} F_d$.
\\\\
The author would like to thank Ana Khukhro and Alain Valette for the many useful comments.

\section{The full box spaces of the free groups}\label{F}
In this section we will prove that the full box spaces of free groups are different, at least if the amount of generators is sufficiently different. This suggests that the full box spaces of all free groups are different.\\
In the proof we make use of normal subgroup growth, for further reading on (normal) subgroup growth we refer to \cite{lub}. For this paper we only need $a_n^\lhd(G)$, which is the amount of normal subgroups of the group $G$ of index $n$.

\begin{proof}[Proof of \cref{free}]
	Suppose that the full box spaces of the free groups $F_d$ and $F_k$ are coarsely equivalent where $2(k+1)<\frac{(d-1)^2}{4d}$, i.e.\ there is a coarse equivalence $\Phi$ between $\Boxf F_d$ and $\Boxf F_k$. Due to \cref{AA1} there is an almost permutation $\phi$ between the components of $\Boxf F_d$ and the components of  $\Boxf F_k$. As there is some $C'$ such that $\Im\Phi$ is $C'$-dense, components of order less than some $n$ must be mapped to a component of order less than $n\cdot|B[0,C']|$, where $B[0,C']$ is the closed ball of radius $C'$. Now set $C=|B[0,C']|$ and set $D$ equal to the number of components that are not in the domain of $\phi$.\\
	So $|\{N\lhd F_d\mid \#(F_d/N)\le n\}|-D$ is not greater than $|\{N\lhd F_k\mid \#(F_k/N)\le Cn\}|$ for any $n$.
	Note that \[\{N\lhd F_d\mid \#(F_d/N)\le n\}=\{N\lhd F_d\mid [F_d:N]\le n\}=\displaystyle\sum_{i=1}^{n}a_i^\lhd(F_d),\]
	so we find the following inequality:
	\[\displaystyle\sum_{i=1}^{Cn}a_i^\lhd(F_k)+D\ge\displaystyle\sum_{i=1}^{n}a_i^\lhd(F_d)\ge a_n^\lhd(F_d)\]
	It suffices to find an $n$ for which this is not the case.\\
	Let $n$ be a power of $2$, $n=2^m$. Then $a_n^\lhd(F_d)\ge 2^{cm^2}$ if $c<\frac{(d-1)^2}{4d}$ due to Theorem 3.7 of \cite{lub}. As $\frac{(d-1)^2}{4d}>2(k+1)$ we can take $c=2(k+1)+2\delta$, where $\delta>0$. Due to Theorem 2.6 and Lemma 2.5 of \cite{lub}, $a_i^\lhd(F_k)\le i^ki^{2(k+1)\log_2(i)}$ for every $i\in\N$.
	By combining these two bounds we can make the following computation:
	\begin{eqnarray*}
	2^{cm^2}-D & \le & a_n^\lhd(F_d)-D\\
	& \le & \sum_{i=1}^{Cn}a_i^\lhd(F_k)\\
	& \le & \sum_{i=1}^{Cn}i^ki^{2(k+1)\log_2(i)}\\
	& \le & \sum_{i=1}^{Cn}(Cn)^k(Cn)^{2(k+1)\log_2(Cn)}\\
	& = & C^{k+1}n^{k+1}C^{2(k+1)(\log_2(n)+\log_2(C))}n^{2(k+1)(\log_2(n)+\log_2(C))}\\
	& = & 2^{(k+1)\log_2(C)}2^{m(k+1)}2^{2(k+1)(m+\log_2(C))\log_2(C)}2^{2m(k+1)(m+\log_2(C))}\\
	& = & 2^{(k+1)\log_2(C)+m(k+1)+2(k+1)(m+\log_2(C))\log_2(C)+2m(k+1)(m+\log_2(C))}\\
	& = & 2^{2(k+1)m^2+m(k+1)+4m(k+1)\log_2(C)+2(k+1)\log_2(C)^2+(k+1)\log_2(C)}\\
	& = & 2^{(k+1)(2m^2+m+4m\log_2(C)+2\log_2(C)^2+\log_2(C))}\\
	\end{eqnarray*}
	Now we can take $m\gg0$ such that $2m^2+m+4m\log_2(C)+2\log_2(C)^2+\log_2(C)\le(2+\frac{\delta}{k+1})m^2$ and $D<2^{cm^2}-2^{(2(k+1)+\delta)m^2}$. But then we find the following contradiction.
	\begin{eqnarray*}
		2^{cm^2}-D & \le & 2^{(k+1)(2m^2+m+4m\log_2(C)+2\log_2(C)^2+\log_2(C))}\\
		& \le & 2^{(k+1)(2+\frac{\delta}{k+1})m^2}\\
		& = & 2^{(2(k+1)+\delta)m^2}\\
		& < & 2^{cm^2}-D \\
	\end{eqnarray*}
	This proves that $\Boxf F_d$ is not coarsely equivalent to $\Boxf F_k$ for $2(k+1)<\frac{(d-1)^2}{4d}$.
\end{proof}
To use \cref{free} we only need to find appropriate values for $k$ and $d$. The condition $2(k+1)<\frac{(d-1)^2}{4d}$ is satisfied if and only if $d$ is not smaller than $8k+10$. For example $\Boxf F_2$ is not coarsely equivalent to $\Boxf F_{26}$.

\section{The full box spaces of $\Z^n$}\label{Z}
In this section we will prove that the full box space of $\Z^n$ is not coarsely equivalent to the full box space of a $2$-generated group for every \Alln. To do so we will compare the growth in $k$ of $\#\{$quotients with diameter $\le k\}$, which we will call the diameter growth of the components of these full box spaces. Note that the term diameter growth is often used to compare the growth of the diameter with that of the index.
Once we know the diameter growth we can compare the full box spaces using the following result:
\begin{proposition}\label{corpoly}
Let $G$ and $H$ be two groups, with $\Boxf G$ coarsely equivalent to $\Boxf H$ and let $a\in\N$. If $\#\{N\lhd G\mid \diam{G/N}\le k\}=\O(k^a)$, then $\#\{N\lhd H\mid \diam{H/N}\le k\}=\O(k^a)$.
\end{proposition}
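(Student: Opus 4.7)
My plan is to feed the coarse equivalence $\Phi\colon\Boxf G\to\Boxf H$ into \cref{AA1} and then transfer the diameter bound from $G$ to $H$ component by component, using the fact that an $(A,A)$-quasi-isometry between finite metric spaces distorts diameters only by an affine function of $A$.

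First I would invoke \cref{AA1} to obtain a constant $A$ and an almost permutation $\phi$ of components such that $\Phi|_{G/N}$ is an $(A,A)$-quasi-isometry onto $\phi(G/N)=H/M$ whenever $G/N$ lies in the domain of $\phi$. Let $D$ be the (finite) number of components of $\Boxf H$ outside the image of $\phi$. The key inequality to verify is that whenever $H/M=\phi(G/N)$ and $\diam{H/M}\le k$, one has $\diam{G/N}\le Ak+A^2$. This is immediate from the lower quasi-isometry bound $d(x,y)\le A\,d(\Phi(x),\Phi(y))+A^2$ applied to any two points of $G/N$.

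Combining these observations gives
\[\#\{M\lhd H\mid \diam{H/M}\le k\}\le D+\#\{N\lhd G\mid \diam{G/N}\le Ak+A^2\},\]
since every component $H/M$ counted on the left is either one of the $D$ exceptional ones or is of the form $\phi(G/N)$ for a unique $N\lhd G$ which then satisfies $\diam{G/N}\le Ak+A^2$. The hypothesis $\#\{N\lhd G\mid \diam{G/N}\le k\}=\O(k^a)$ then bounds the right-hand side by $D+\O((Ak+A^2)^a)=\O(k^a)$, which is exactly the conclusion.

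I do not expect any serious obstacle here: the data supplied by \cref{AA1} is already tailored to this sort of component-wise transfer, and the affine diameter-distortion estimate for an $(A,A)$-quasi-isometry is routine. The only point requiring a small amount of care is checking that the components outside the domain or image of the almost permutation contribute only an additive constant rather than corrupting the asymptotic estimate, but this is automatic from the definition of an almost permutation, whose domain and image are co-finite.
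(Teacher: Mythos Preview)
Your proposal is correct and follows essentially the same route as the paper: invoke \cref{AA1} to obtain the almost permutation $\phi$ and the constant $A$, absorb the finitely many components of $\Boxf H$ outside $\Im\phi$ into an additive constant $D$, and use the quasi-isometry lower bound to deduce $\diam{G/N}\le Ak+A^2$ whenever $\diam{\phi(G/N)}\le k$, arriving at the same inequality $\#\{M\lhd H\mid \diam{H/M}\le k\}\le D+\#\{N\lhd G\mid \diam{G/N}\le Ak+A^2\}=\O(k^a)$.
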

\begin{proof}
As there exists a coarse equivalence $\Phi\colon\Boxf G\to\Boxf H$ we can use \cref{AA1} to find an almost permutation $\phi$ between the components of $\Boxf G$ and the components of $\Boxf H$ such that $\Phi\vert_{G/N}$ is an $(A,A)$-quasi-isometry between $G/N$ and $\phi(G/N)$, if $G/N$ lies in the domain of $\phi$.
Therefore $\diam{\phi(G/N)}\le A\diam{G/N}+A$.\\
We can take a constant $C$ such that $\#\{N\lhd G\mid \diam{G/N}\le k\}\le Ck^a$ for every $k$.
Now $\phi$ is an almost permutation, so we can define $D=|(\Im\phi)^c|$. Then we can bound $\#\{N\lhd H\mid \diam{H/N}\le k\}$ as follows:
\begin{eqnarray*}
\#\{N\lhd H\mid \diam{H/N}\le k\} & \le & \#\{N\lhd H\mid \diam{H/N}\le k, H/N\in\Im(\phi)\}+D\\
& \le & \#\{N\lhd G\mid \diam{\phi(G/N)}\le k, G/N\in\dom(\phi)\}+D\\
& \le & \#\{N\lhd G\mid \diam{G/N}\le Ak+A^2, G/N\in\dom(\phi)\}+D\\
& \le & \#\{N\lhd G\mid \diam{G/N}\le Ak+A^2\}+D\\
& \le & C(Ak+A^2)^a+D.\\
\end{eqnarray*}
So $\#\{N\lhd H\mid \diam{H/N}\le k\}=\O(k^a)$.
\end{proof}
Now we want to calculate the diameter growth of $\Boxf\Z^n$.

\begin{proposition}\label{growZ}
	For every $n\in\N$ we have \[\#\{N\lhd\Z^n\mid \diam{\Z^n/N}\le k \}=\Omega\left(k^{n^2}\right).\]
\end{proposition}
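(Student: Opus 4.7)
The plan is to produce $\Omega(k^{n^2})$ distinct finite-index sublattices $N \lhd \Z^n$, each satisfying $\diam{\Z^n/N} \le k$, by parametrising them via ``nearly-diagonal'' integer matrices. Fix constants $c_1, \epsilon > 0$ depending only on $n$ with $c_1 > (n-1)\epsilon$ and $2c_1 + (n-1)\epsilon \le 2$. Consider $n \times n$ integer matrices $M = (m_{ij})$ with diagonal entries $m_{ii} \in [c_1 k/n,\, 2c_1 k/n]$ and off-diagonal entries $m_{ij} \in [-\epsilon k/n,\, \epsilon k/n]$ for $i \neq j$. There are $\Theta(k^{n^2})$ such matrices; let $L_M \subset \Z^n$ denote the row-span of $M$. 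The condition $c_1 > (n-1)\epsilon$ makes $M$ strictly diagonally dominant, forcing $|\det M| = \Theta(k^n)$, so $L_M$ has finite index.

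For the diameter bound, each row $v_i$ of $M$ has $\|v_i\|_1 \le 2c_1 k/n + (n-1)\epsilon k/n$, hence $\sum_i \|v_i\|_1 \le (2c_1 + (n-1)\epsilon)\, k \le 2k$. A standard fundamental-parallelepiped argument then gives that the covering radius of $L_M$ in $(\Z^n, \ell^1)$ is at most $\tfrac12 \sum_i \|v_i\|_1 \le k$, and this covering radius equals $\diam{\Z^n/L_M}$ for the image of the standard generating set.

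The main obstacle is the overcount: two matrices $M, M'$ in the family give $L_M = L_{M'}$ precisely when $M' = UM$ for some $U \in GL_n(\Z)$, so I need to show that only $O_n(1)$ such $U$ preserve the family. Since any such $U$ satisfies $\|UM\|_\infty \le 2c_1 k/n$, while Cramer's rule applied to $M$ gives $\|M^{-1}\|_\infty \le n!\,\|M\|_\infty^{n-1}/|\det M| = O_n(1/k)$, multiplying these two bounds yields $\|U\|_\infty = O_n(1)$. Hence $U$ ranges over a finite subset of $GL_n(\Z)$ whose size depends only on $n$, so each sublattice in our parametrisation has bounded multiplicity, and dividing the $\Theta(k^{n^2})$ matrices by this constant multiplicity yields the desired $\Omega(k^{n^2})$ distinct sublattices.
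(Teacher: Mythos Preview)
Your proof is correct and follows the same core strategy as the paper: parametrise $\Omega(k^{n^2})$ sublattices by nearly-diagonal (strictly diagonally dominant) integer matrices whose entries are $\Theta(k)$. The two executions differ only in bookkeeping. For the diameter bound, the paper uses a greedy coordinate-reduction argument (subtracting the generator $x_i$ whenever $|y_i|$ is too large), whereas you invoke the cleaner parallelepiped rounding bound $\mu(L)\le\tfrac12\sum_i\|v_i\|_1$; both rely on diagonal dominance in the same way. For distinctness, the paper chooses the specific parameters $x_{ii}\in(k/2n,k/n]$ and $|x_{ij}|\le k/(2n^2)$ so that the map $M\mapsto L_M$ is actually \emph{injective}, proved by a direct coordinate computation showing that the only $U\in GL_n(\Z)$ keeping $M$ in the family is $U=I$. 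Your argument instead bounds the multiplicity by showing $\|U\|_\infty=\|M'M^{-1}\|_\infty=O_n(1)$ via Cramer's rule, which is more robust to the choice of $c_1,\epsilon$ but costs a harmless constant factor. Either route yields the claimed $\Omega(k^{n^2})$.
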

\begin{proof}
	Fix a $k$ and consider the subgroups of $\Z^n$ generated by $x_1,\ldots,x_n$ with $\frac{k}{2n}< x_{ii}\le\frac{k}{n}$ and $|x_{ij}|\le\frac{k}{2n^2}$ for every $i\not= j$, where $x_i=(x_{i1},\ldots,x_{in})$.
	The number of possibilities for $x_1,\ldots,x_n$ is $\left(\frac{k}{2n}\right)^n\left(\frac{2k}{2n^2}+1\right)^{n(n-1)}$. This is more than $\frac{1}{(2n)^n}\frac{1}{n^{2n(n-1)}}k^{n^2}$. So it suffices to show that all these subgroups $N$ are different and the diameter of $\Z/N$ is not greater than $k$.\\
	To show that these subgroups are different take $N=N'$ where $N$ is generated by $x_1,\ldots,x_n$ and $N'$ is generated by $x'_1,\ldots,x'_n$. For every $i\le n$ we can take $x'_i=a_1x_1+\ldots+a_nx_n$ with $a_1,\ldots,a_n\in\Z$, since $N=N'$. Now take $j\not=i$ such that $a_j$ is maximal. By projecting on the $j^\text{th}$-component we get the following:
	\begin{eqnarray*}
	\frac{k}{2n^2} & \ge & |a_1x_{1j}+\ldots+a_nx_{nj}|\\
	& \ge & |a_j|x_{jj}-\sum_{k\not=j}|a_kx_{kj}|\\
	& > & \frac{k}{2n}|a_j|-\sum_{k\not=j}\frac{k}{2n^2}|a_k|\\
	& \ge & \frac{k}{2n}|a_j|-(n-1)\frac{k}{2n^2}|a_j|\\
	& = & \frac{k}{2n^2}|a_j|.
	\end{eqnarray*}
	We can conclude that $a_j=0$, therefore only $a_i$ can be different from $0$, which has to be equal to $1$, because $\frac{k}{2n}<x_{ii},x'_{ii}\le\frac{k}{n}$, so $x_i'=x_i$. This is true for every $i$, so $N$ and $N'$ are generated by the same vectors $x_1,\ldots,x_n$.\\
	To prove that $\diam{\Z^n/N}\le k$ suppose there is such a subgroup $N$ for which $\diam{\Z^n/N}>k$. So there is an element in $\Z^n/N$ such that for every representing vector $y=(y_1,\ldots,y_n)$ in $\Z^n$ we have $\displaystyle\sum_{i=1}^n |y_i|>k$. Let $y$ be the representing vector for which $\|y\|$ is minimal and let $i$ be such that $|y_i|$ is maximal. Without loss of generality we may assume $y_i$ to be positive. Now as $\displaystyle\sum_{i=1}^n |y_i|>k$, we find that $y_i>\frac{k}{n}\ge x_{ii}>0$ and we get the following:
	\begin{eqnarray*}
	\|y-x_i\| & = & \sum_{j=1}^{n} |y_j-x_{ij}|\\
	& \le & y_i-x_{ii} + \sum_{j\not= i}\left( |y_j| + |x_{ij}|\right)\\
	& < & y_i-\frac{k}{2n} + \sum_{j\not= i} \left(|y_j| + \frac{k}{2n^2}\right)\\
	& = & \|y\| - \frac{k}{2n^2}.
	\end{eqnarray*}
	Now $y-x_i$ is a smaller representing vector of the same element as $y$, which is a contradiction.\\
	So for all these subgroups $N$ we have $\diam{\Z^n/N}\le k$, which proves that $\#\{N\lhd\Z^n\mid \diam{\Z^n/N}\le k \}=\Omega\left(k^{n^2}\right)$.
\end{proof}
In the proof of \cref{mainZn} it will suffice to know the diameter growth of a $2$-generated virtually $\Z^n$ group $H$. Consequently $H$ must be $\Z^n$-by-finite, because one can turn the finite index subgroup $\Z^n$ into a finite index normal subgroup by taking the intersection of all $g^{-1}\Z^n g$, which is again $\Z^n$ as it is a finite index subgroup in $\Z^n$.
In order to calculate this growth we will first restrict the normal subgroups of $H$ to the finite index normal subgroup $\Z^n\lhd H$. To better understand these normal subgroups of $\Z^n$ we define {\mingen}s.

\begin{definition}
	A {\mingen} of $N\lhd\Z^n$ is the subset $\{x_1,\ldots,x_n\}$ of $N$ where $x_1$ is the smallest vector in $N$ (for the euclidean norm) and $x_i$ is the smallest vector in $N\setminus\langle x_1,\ldots,x_{i-1}\rangle$ such that
	$N\cap\Span{x_1,\ldots,x_i}=\langle x_1,\ldots,x_i\rangle$.
\end{definition}
A {\mingen} is a generating set of $N$, because it is linearly independent and therefore $N=N\cap\Span{x_1,\ldots,x_n}=\langle x_1,\ldots,x_n\rangle$.\\
Note that such a generating set always exists. Also note that a subset of a {\mingen} is a {\mingen} of what it generates. This notion will be important to control the diameter of $\Z^n/(N\cap\Z^n)$. 

\begin{lemma}\label{bound}
	For every {\AllN} there exists a constant $D_n\in\N$ such that for every subgroup $N$ of $\Z^n$ and every {\mingen}  $\{x_1,\ldots,x_n\}$ we have $\norm{a_1x_1+\ldots+a_nx_n}\ge\frac{1}{D_n}\displaystyle\max_i\{\|a_ix_i\|\}$ for every $a_1,\ldots,a_n\in\R$.
\end{lemma}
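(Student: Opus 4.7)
The plan is to proceed by induction on $n$, with the trivial base case $n=1$ (take $D_1=1$). For the inductive step, assume the lemma with constant $D_{n-1}$ in dimension $n-1$, and fix a {\mingen} $\{x_1,\ldots,x_n\}$ of a rank-$n$ sublattice $N$. Let $V = \Span{x_1,\ldots,x_{n-1}}$, let $P$ denote the orthogonal projection onto $V$, and decompose $x_n = P(x_n) + x_n^*$ with $x_n^* \in V^\perp$. The primitivity condition in the definition forces $\langle x_1,\ldots,x_{n-1}\rangle = N\cap V$, so $\{x_1,\ldots,x_{n-1}\}$ is itself a {\mingen} of its span and the inductive hypothesis is available there. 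Writing $P(x_n)=\sum_{i<n}\mu_i x_i$ and $y=\sum_i a_i x_i$, one has the orthogonal decomposition
\[
\|y\|^2 = \|P(y)\|^2 + a_n^2\|x_n^*\|^2, \qquad P(y) = \sum_{i<n}(a_i + a_n\mu_i)\,x_i.
\]

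The heart of the argument, and the step I expect to require the most care, is a sub-claim: there exists a constant $c_n>0$ depending only on $n$ with $\|x_n^*\| \ge c_n\|x_n\|$. The input is the minimality of $x_n$: for every $v\in\langle x_1,\ldots,x_{n-1}\rangle$ the vector $x_n - v$ remains in the eligibility set, so $\|x_n\|\le\|x_n-v\|$. Squaring and using Pythagoras converts this into $\|P(x_n)\|\le\|P(x_n)-v\|$ for every lattice $v$ of the sublattice, so $P(x_n)$ lies in the Voronoi cell of $0$; a standard Babai nearest-plane estimate then yields $\|P(x_n)\|^2\le\tfrac14\sum_{j<n}\|x_j^*\|^2$. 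The obstacle is to upgrade this to a bound of the form $\|P(x_n)\|\le(1-c_n^2)^{1/2}\|x_n\|$: the naive $\|x_j^*\|\le\|x_j\|$ is not sharp enough once $n\ge 5$, so one has to feed in the size-reduction inequalities $|\langle x_n,x_j\rangle|\le\tfrac12\|x_j\|^2$ (which themselves come from minimality applied to $v=\pm x_j$), together with the inductive control on the geometry of $\{x_1,\ldots,x_{n-1}\}$ and the fact that $\|x_1\|$ is the absolute shortest nonzero vector of $N$.

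Granted the sub-claim, the conclusion follows from a short case split on the index $j$ achieving $\max_i\|a_i x_i\|$. If $j=n$ then $\|y\|\ge|a_n|\|x_n^*\|\ge c_n\|a_n x_n\|$. If $j<n$, the inductive hypothesis applied to $P(y)$ in $\langle x_1,\ldots,x_{n-1}\rangle$ gives $\|P(y)\|\ge D_{n-1}^{-1}|a_j+a_n\mu_j|\|x_j\|$. Either $|a_n\mu_j|\le|a_j|/2$, in which case $|a_j+a_n\mu_j|\ge|a_j|/2$ and we are done, or $|a_n\mu_j|>|a_j|/2$, in which case the inductive hypothesis applied to $P(x_n)=\sum_i\mu_i x_i$ forces $|\mu_j|\|x_j\|\le D_{n-1}\|P(x_n)\|\le D_{n-1}\|x_n\|$, whence $\|a_n x_n\|\ge(2D_{n-1})^{-1}\|a_j x_j\|$ and we reduce to the first case at the cost of another factor of $c_n$. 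Tracking the constants produces an admissible $D_n$ (rounded up to an integer) in terms of $c_n$ and $D_{n-1}$.
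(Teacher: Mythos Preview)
Your reduction after the sub-claim is clean and correct: once you know $\|x_n^*\|\ge c_n\|x_n\|$, the case split on $j$ together with the inductive hypothesis does give an admissible $D_n$. The difficulty is entirely in the sub-claim, as you yourself flag, and here the sketch is not yet a proof. The Babai/Voronoi inequality $\|P(x_n)\|^2\le\tfrac14\sum_{j<n}\|x_j^*\|^2$ combined with $\|x_j^*\|\le\|x_j\|\le\|x_n\|$ only yields $\|P(x_n)\|^2\le\tfrac{n-1}{4}\|x_n\|^2$, which is vacuous for $n\ge5$; you note this, but the proposed remedies (size-reduction $|\langle x_n,x_j\rangle|\le\tfrac12\|x_j\|^2$, inductive control on $\{x_1,\dots,x_{n-1}\}$, and the fact that $x_1$ is globally shortest) are listed rather than combined. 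In particular it is not clear from your outline \emph{how} the global shortest-vector property of $x_1$ enters: size-reduction and the inductive hypothesis only see the sublattice $\langle x_1,\dots,x_{n-1}\rangle$, and within that sublattice nothing prevents $x_n$ from being nearly tangent to $V$.

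The paper handles exactly this point, but by a different mechanism. Rather than proving $\|x_n^*\|\ge c_n\|x_n\|$ as a standalone fact, it argues by contradiction: assuming the lemma fails with some $(a_i)$, one first shows all $\|a_ix_i\|$ are comparable, then splits on whether $|a_n|$ is small relative to $\max_i|a_i|$. The interesting case (your sub-claim, essentially) is dispatched by a \emph{pigeonhole argument}: the assumed failure forces $|x_n\cdot e|$ to be tiny relative to $\|x_n\|$, and then among the multiples $x_n,2x_n,\dots,Mx_n$ (for $M$ depending only on $n$) two have nearly equal fractional parts in the $x_1,\dots,x_{n-1}$ coordinates, so their difference minus an integer combination of $x_1,\dots,x_{n-1}$ is a nonzero lattice vector shorter than $x_1$. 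That contradiction is where the global minimality of $x_1$ is actually used. Your outline is missing this (or an equivalent) idea; without it the sub-claim remains open.
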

As we will do in the proof of \cref{bound}, we define $D_n$ recursively with $D_1=1$ and $D_n=D_{n-1}^2(4n^2D_{n-1}^3)^n$.

If the {\mingen} we choose happens to be orthogonal, this lemma would be obvious. The main idea behind the proof is to show that {\mingen}s are sufficiently similar to being orthogonal. In the proof we will assume that \cref{bound} is true up to some value $n$. We will use this to prove an intermediate result (\cref{orth} for $m=n$) and then we will use that to show that \cref{bound} is true for $n+1$.
\begin{lemma}\label{orth}
	Let $\{x_1,\ldots,x_{m+1}\}$ be a {\mingen} and let $p$ be the orthogonal projection on $\Span{x_1,\ldots,x_{m}}$, so we can write $p(x_{m+1})=a_1x_1+\ldots+a_mx_m$. Suppose \cref{bound} is satisfied for all $n\le m$. Then $|a_m|\le\frac{m}{2}D_m$ and $|a_i|\le\frac{m}{2}D_{m}^2$ for all $i<m$.
\end{lemma}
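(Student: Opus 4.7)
The plan is to exploit the minimality of $x_{m+1}$ under integer shifts by $x_1,\ldots,x_m$, together with Lemma \ref{bound} applied to the subset $\{x_1,\ldots,x_m\}$ (which is itself a minimal generating set of $\langle x_1,\ldots,x_m\rangle$). First I would observe that for any $(b_1,\ldots,b_m)\in\Z^m$, the shifted vector $x_{m+1}-\sum b_ix_i$ is again a valid candidate for $x_{m+1}$ in the selection rule: it lies in $N\setminus\langle x_1,\ldots,x_m\rangle$, and the intersection condition is preserved because integer shifts do not alter $\Span{x_1,\ldots,x_{m+1}}$ or $\langle x_1,\ldots,x_{m+1}\rangle$. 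Hence $\|x_{m+1}-\sum b_ix_i\|\ge\|x_{m+1}\|$, which by orthogonal decomposition reduces to $\|p(x_{m+1})-\sum b_ix_i\|\ge\|p(x_{m+1})\|$ for every $b\in\Z^m$; that is, $p(x_{m+1})$ is a shortest vector in its coset modulo $L=\langle x_1,\ldots,x_m\rangle$.

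Next, the nearest-plane (Babai) procedure with respect to the Gram-Schmidt basis $q_1,\ldots,q_m$ of $\Span{x_1,\ldots,x_m}$, where $q_i$ is the component of $x_i$ orthogonal to $\Span{x_1,\ldots,x_{i-1}}$, produces some $b\in\Z^m$ with $\|p(x_{m+1})-\sum b_ix_i\|^2\le\tfrac{1}{4}\sum\|q_i\|^2$. Combined with the shortest-in-coset bound and the inequalities $\|q_i\|\le\|x_i\|\le\|x_m\|$, this yields $\|p(x_{m+1})\|^2\le\tfrac{m}{4}\|x_m\|^2$. The monotonicity $\|x_j\|\le\|x_{j+1}\|$ would be established beforehand: $x_{j+1}$ satisfies the conditions defining $x_j$ because the intersection equality $N\cap\Span{x_1,\ldots,x_{j-1},x_{j+1}}=\langle x_1,\ldots,x_{j-1},x_{j+1}\rangle$ follows from the analogous full-rank condition for $\{x_1,\ldots,x_{j+1}\}$ together with uniqueness of coefficients in this linearly independent set.

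Applying Lemma \ref{bound} with $n=m$ to $\{x_1,\ldots,x_m\}$ gives $\|p(x_{m+1})\|\ge D_m^{-1}\max_i|a_i|\|x_i\|$, so $|a_i|\|x_i\|\le\tfrac{\sqrt{m}\,D_m}{2}\|x_m\|$ for each $i\le m$. Setting $i=m$ yields $|a_m|\le\tfrac{\sqrt{m}\,D_m}{2}\le\tfrac{m}{2}D_m$, matching the claim. For $i<m$, the extra factor $D_m$ in the target bound should arise from a second application of Lemma \ref{bound}: the vector $\sum_{i<m}a_ix_i=p(x_{m+1})-a_mx_m$ lies in $\Span{x_1,\ldots,x_{m-1}}$ with norm at most $\|p(x_{m+1})\|+|a_m|\|x_m\|=O(D_m\|x_m\|)$, and then applying Lemma \ref{bound} with $n=m-1$ to $\{x_1,\ldots,x_{m-1}\}$ bounds $|a_i|\|x_i\|$ by $O(D_m^2\|x_m\|)$.

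The main obstacle I anticipate is converting $|a_i|\|x_i\|\le C\|x_m\|$ into an \emph{absolute} bound on $|a_i|$, since the ratio $\|x_m\|/\|x_i\|$ is a priori unbounded in a minimal generating set. Overcoming this should use further structural information from the minimality conditions: for $i<j$, the validity of the shift $x_j\mapsto x_j-kx_i$ gives $|\langle x_i,x_j\rangle|\le\|x_i\|^2/2$, which constrains the Gram-Schmidt coefficients (yielding in particular $|\mu_{j1}|\le\tfrac{1}{2}$) and forces enough orthogonality in the basis $\{x_1,\ldots,x_m\}$ to extract the stated bound $\tfrac{m}{2}D_m^2$ from the preceding norm estimates.
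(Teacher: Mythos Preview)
Your argument for $|a_m|$ is correct and matches the paper's: from minimality of $x_{m+1}$ under integer shifts one gets $\|p(x_{m+1})\|\le\tfrac{m}{2}\|x_m\|$, and \cref{bound} at level $m$ gives $\|p(x_{m+1})\|\ge D_m^{-1}|a_m|\,\|x_m\|$, whence $|a_m|\le\tfrac{m}{2}D_m$.

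The gap is precisely the one you flag for $i<m$, and the resolution you sketch cannot close it. The inequality $|\langle x_i,x_j\rangle|\le\tfrac12\|x_i\|^2$ (and more generally any LLL-type size-reduction condition) says nothing about the ratio $\|x_m\|/\|x_i\|$: already the orthogonal lattice with $x_1=e_1$, $x_2=Me_2$ has $\|x_2\|/\|x_1\|=M$ unbounded, yet is as orthogonal as possible. So from $|a_i|\,\|x_i\|\le C\,\|x_m\|$ alone you cannot deduce any absolute bound on $|a_i|$. Your second application of \cref{bound} with $n=m-1$ produces the same kind of inequality, with $\|x_{m-1}\|$ on the right, and the ratio $\|x_{m-1}\|/\|x_i\|$ is equally uncontrolled.

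The paper avoids this by never comparing $\|x_i\|$ with $\|x_m\|$ directly. Instead it projects $x_{m+1}$ onto each intermediate subspace $\Span{x_1,\ldots,x_{m'}}$, applies the minimality estimate there to obtain $\|p_{m'}(x_{m+1})\|\le\tfrac{m'}{2}\|x_{m'}\|$, and uses \cref{bound} at level $m'$; the norms $\|x_{m'}\|$ then cancel at each stage. To control the cross-terms arising when one passes from $p_{m'}$ to $p_{m'-1}$ the paper invokes the present lemma for smaller values of $m$, so the proof is in fact an induction on $m$ interleaved with the induction proving \cref{bound}. That inductive cascade is the missing idea.
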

For this lemma we will also assume that $D_{i+1}\ge \frac{i}{2}D_{i}^2+1$ for every $i\ge 1$, which will be the case in the proof of \cref{bound}.
\begin{proof}
	We proceed by contradiction. Let $\{x_1,\ldots,x_{m+1}\}$ be a {\mingen} with the smallest $m$ such that it does not satisfy \cref{orth}. Then we find
	\[\norm{p(x_{m+1})}=\norm{a_1x_1+\ldots+a_mx_m}\ge\frac{1}{D_m}\max_i\norm{a_ix_i}\ge\frac{|a_m|}{D_m}\norm{x_m}.\]
	However as $\{x_1,\ldots,x_{m+1}\}$ is a {\mingen} we have that for every $b_1,\ldots,b_m$ in $\Z$ $\norm{x_{m+1}}\le\norm{x_{m+1}-b_1x_1-\ldots-b_mx_m}$, we even have $\norm{p(x_{m+1})}\le\norm{p(x_{m+1})-b_1x_1-\ldots-b_mx_m}$, because the projections of both vectors onto $\Span{x_1,\ldots,x_m}^\perp$ are equal. If we take $b_i$ such that $|b_i-a_i|\le\frac{1}{2}$, then we find the following inequality:
	\begin{multline*}
	\norm{p(x_{m+1})}\le\norm{p(x_{m+1})-b_1x_1-\ldots-b_mx_m}\\\le\norm{(a_1-b_1)x_1}+\;\ldots\;+\norm{(a_m-b_m)x_m}\le\frac{m}{2}\norm{x_m}.
	\end{multline*}
	Combining these inequalities we conclude that $|a_m|\le\frac{m}{2}D_{m}$. As we assume this {\mingen} does not satisfy the lemma there must be an $a_i$ such that $|a_i|>\frac{m}{2}D_{m}^2$, let $l$ be the largest such $i$.
	\\
	Now let $p_i$ be the orthogonal projection onto $\Span{x_1,\ldots,x_i}$. We will use these projections to bound the corresponding $|a_i|$. We already have $p(x_{m+1})=a_1x_1+\ldots+a_mx_m$. Now we take something similar for the projections $p_i$:
	\begin{eqnarray*}
	p_{m-1}(a_mx_m) & = & a_{m-1,m}x_{m-1}+\ldots+a_{1,m}x_1\\
	p_{m-2}((a_{m-1}+a_{m-1,m})x_{m-1}) & = & a_{m-2,m-1}x_{m-2}+\ldots+a_{1,m-1}x_1\\
	 & \vdots & \\
	p_{l}((a_{l+1}+a_{l+1,m}+\ldots+a_{l+1,l+2})x_{l+1}) & = & a_{l,l+1}x_l+\ldots+a_{1,l+1}x_1\\
	\end{eqnarray*}
	Let $m'$ be such that $l\le m'< m$. As before we have
	\begin{multline*}
	\norm{p_{m'}(x_{m+1})}=\norm{a_1x_1+\ldots+a_{m'}x_{m'}+a_{1,m}x_1+\ldots+a_{m',m}x_{m'}+\ldots+a_{m',m'+1}x_{m'}}\\\ge\frac{1}{D_{m'}}\norm{(a_{m'}+a_{m',m}+\ldots+a_{m',m'+1})x_{m'}}\ge\frac{|a_{m'}+a_{m',m}+\ldots+a_{m',m'+1}|}{D_{m'}}\norm{x_{m'}}.
	\end{multline*}
	As before we can take $b_1,\ldots,b_m$ in $\Z$ such that $|b_i-a_i-a_{i,m}-\ldots-a_{i,m'+1}|\le\frac{1}{2}$ for every $i$.
	Now we find
	\[\norm{p_{m'}(x_{m+1})}\le\norm{p_{m'}(x_{m+1})-b_1x_1-\ldots-b_{m'}x_{m'}}\le\frac{1}{2}\norm{x_1}+\;\ldots\;+\frac{1}{2}\norm{x_{m'}}\le\frac{m'}{2}\norm{x_{m'}}.\]
	So $\frac{m'}{2}D_{m'}\ge|a_{m'}+a_{m',m}+\ldots+a_{m',m'+1}|$.
	As $m$ is assumed to be the smallest value for which this lemma is not true, we have that when $p_{m'}(x_{m'+1})$ is written as a linear combination of $x_1,\ldots,x_{m'}$, where the coefficient of $x_m$ is not greater than $\frac{m'}{2}D_{m'}$ and the other coefficients are not greater than $\frac{m'}{2}D_{m'}^2$. Now as
	\[p_{m'}((a_{m'+1}+a_{m'+1,m}+\ldots+a_{m'+1,m'+2})x_{m'+1}) = a_{m',m'+1}x_{m'}+\ldots+a_{1,m'+1}x_1\]
	we have $|a_{m',m'+1}|\le \frac{m'}{2}D_{m'}|a_{m'+1}+a_{m'+1,m}+\ldots+a_{m'+1,m'+2}| \le\frac{m'}{2}D_{m'}\frac{m'+1}{2}D_{m'+1}$ and $|a_{i,m'+1}|\le \frac{m'}{2}D_{m'}^2|a_{m'+1}+a_{m'+1,m}+\ldots+a_{m'+1,m'+2}| \le\frac{m'}{2}D_{m'}^2\frac{m'+1}{2}D_{m'+1}$ for $i<m'$.\\
	Now we had $\frac{l}{2}D_{l}\ge|a_{l}+a_{l,m}+\ldots+a_{l,m'+1}|$, so using the fact that $D_{i+1}\ge \frac{i}{2}D_{i}^2+1$ and $iD_i\le (i+1)D_{i+1}$ for every $i\ge 1$, we can make the following computation.
	\begin{eqnarray*}
	|a_l| & \le & |a_{l,m}|+\ldots+|a_{l,l+1}|+\frac{l}{2}D_l\\
	& \le & \frac{m-1}{2}D_{m-1}^2\frac{m}{2}D_{m} + \ldots + \frac{l+1}{2}D_{l+1}^2\frac{l+2}{2}D_{l+2}+  \frac{l}{2}D_{l}\frac{l+1}{2}D_{l+1}+\frac{l}{2}D_l\\
	& \le & \frac{m-1}{2}D_{m-1}^2\frac{m}{2}D_{m} + \ldots + \frac{l+1}{2}D_{l+1}^2\frac{l+2}{2}D_{l+2}+  \frac{l+1}{2}D_{l+1}\left(\frac{l}{2}D_{l}+1\right)\\
	& \le & \frac{m-1}{2}D_{m-1}^2\frac{m}{2}D_{m} + \ldots + \frac{l+1}{2}D_{l+1}^2\frac{l+2}{2}D_{l+2}+  \frac{l+1}{2}D_{l+1}^2\\
	& \le & \frac{m-1}{2}D_{m-1}^2\frac{m}{2}D_{m} + \ldots + \frac{l+1}{2}D_{l+1}^2\frac{l+2}{2}D_{l+2}+  \frac{l+2}{2}D_{l+2}\\
	& \vdots & \\
	& \le & \frac{m-1}{2}D_{m-1}^2\frac{m}{2}D_{m} +  \frac{m}{2}D_{m}\\
	& \le & \left(\frac{m-1}{2}D_{m-1}^2 + 1\right) \frac{m}{2}D_{m}\\
	& \le & \frac{m}{2}D_{m}^2
	\end{eqnarray*}
	But we assumed $|a_l|>\frac{m}{2}D_{m}^2$, and so we have a contradiction, which proves this lemma.
\end{proof}
Now we can use this result to prove \cref{bound}.
\begin{proof}[Proof of \cref{bound}]
	We define $D_n$ recursively with $D_1=1$ and $D_n=D_{n-1}^2\left(4n^2D_{n-1}^3\right)^n$. For every subgroup $N\lhd\Z^n$ we can take a {\mingen} $x_1,\ldots,x_n$.\\
	Let $n$ be the smallest value for which the lemma is not true, i.e.\ there exist $a_i$ such that  $\frac{1}{D_n}\displaystyle\max_i\{\|a_ix_i\|\} > \norm{a_1x_1+\ldots+a_nx_n}$. As the lemma is obvious for $n=1$, we may assume that $n\ge 2$.
	\\
	First we observe that $\norm{a_ix_i}$ must be similar for all $i$, that is ${\displaystyle\min_i}\{\|a_ix_i\|\}>\frac{1}{2D_{n-1}}{\displaystyle\max_i\{\|a_ix_i\|\}}$.
	We can see this by combining the reverse triangular inequality with the fact that a subset of a {\mingen} is a {\mingen} of what it generates: $\frac{1}{D_n}{\displaystyle\max_i\{\|a_ix_i\|\}} > \norm{a_1x_1+\ldots+a_nx_n} \ge \frac{1}{D_{n-1}}\displaystyle\max_i\{\|a_ix_i\|\}-\min_i\{\|a_ix_i\|\}.$
	So we get the desired result that ${\displaystyle\min_i}\{\|a_ix_i\|\}> \left(\frac{1}{D_{n-1}}-\frac{1}{D_n}\right){\displaystyle\max_i\{\|a_ix_i\|\}} \ge \frac{1}{2D_{n-1}}{\displaystyle\max_i\{\|a_ix_i\|\}}$.
	\\\\
    To continue we would prefer for $x_n$ to be orthogonal to $\Span{x_1,\ldots,x_{n-1}}$. However a partial result will suffice.
	We will show that the angle between $x_n$ and the span of $x_1,\ldots,x_{n-1}$ can not be arbitrarily small, which will prove the lemma.
	So let $p$ be the orthogonal projection onto $\Span{x_1,\ldots,x_{n-1}}$.
	\\\\
	Now distinguish two cases according to whether or not $nD_{n-1}^2|a_n|$ is greater or smaller than $\displaystyle\max_i\{|a_i|\}$.\\\\
	Suppose $\displaystyle\max_i\{|a_i|\}>nD_{n-1}^2|a_n|$.
	As such we can write $p(x_n)$ as the linear combination $a_1'x_1+\ldots +a_{n-1}'x_{n-1}$.
	\\
	Due to \cref{orth} we know that $|a_i'|\le\frac{n}{2}D_{n-1}^2$ for every $i$.
	Now we can take $k$ such that $|a_k|$ is maximized. By combining $|a_k'|\le\frac{n}{2}D_{n-1}^2$ with $\displaystyle\max_i\{|a_i|\}=|a_k|>nD_{n-1}^2|a_n|$ we find that $|a_k+a_k'a_n|\ge|a_k|-\frac{nD_{n-1}^2}{2}|a_n|\ge\frac{1}{2}|a_k|$. This admits the following computation:
	\begin{eqnarray*}
	\frac{1}{D_n}\max_i\{\|a_ix_i\|\} & \ge & \norm{a_1x_1+\ldots+a_nx_n}\\
	& \ge & \norm{p(a_1x_1+\ldots+a_{n-1}x_{n-1}+a_nx_n)}\\
	& \ge & \norm{a_1x_1+\ldots+a_{n-1}x_{n-1}+a_np(x_n)}\\
	& \ge & \norm{(a_1+a_1'a_n)x_1+\ldots+(a_{n-1}+a_{n-1}'a_n)x_{n-1}}\\
	& \ge & \frac{1}{D_{n-1}}\displaystyle\max_i\{\|(a_i+a_i'a_n)x_i\|\}\\
	& \ge & \frac{1}{2D_{n-1}}\|a_kx_k\|\\
	& \ge & \frac{1}{2D_{n-1}}\displaystyle\min_i\{\|a_ix_i\|\}\\
	& \ge & \frac{1}{4D_{n-1}^2}\max_i\{\|a_ix_i\|\}.\\
	\end{eqnarray*}
	Now $n\ge 2$, so $D_n=2D_{n-1}\left(2n^2D_{n-1}\right)^n>4D_{n-1}^2$, which contradicts the earlier computations.
	\\\\
	Up to this point we essentially only used that $x_n$ can not be shortened by adding a linear combination $\lambda_1x_1+\ldots+\lambda_{n-1}x_{n-1}$ with $\lambda_1,\ldots,\lambda_{n-1}\in\Z$. However if $\displaystyle\max_i\{|a_i|\}\le nD_{n-1}^2|a_n|$ this will not be possible. For example for every $\varepsilon>0$ we have $(2,0,0,0,0)$, $(0,2,0,0,0)$, $(0,0,2,0,0)$, $(0,0,0,2,0)$, $(1,1,1,1,\varepsilon)$, but the group generated by these vectors contains $(0,0,0,0,2\varepsilon)$. In the continuation of this proof we will look for a vector like $(0,0,0,0,2\varepsilon)$, more precisely a short vector that is almost orthogonal to $x_1,\ldots, x_{n-1}$.
	\\\\
	As $\displaystyle\max_i\{|a_i|\}\le nD_{n-1}^2|a_n|$, we have $nD_{n-1}^2\norm{a_nx_n}\ge\displaystyle\max_i{\norm{a_ix_i}}$, as $x_n$ is the biggest vector in the basis $\{x_1,\ldots,x_n\}$. Let $e$ be a unit vector perpendicular to $\Span{x_1,\ldots,x_{n-1}}$. Then $\frac{nD_{n-1}^2}{D_n}\norm{a_nx_n} \ge \norm{a_1x_1+\ldots+a_nx_n} \ge |a_nx_n\cdot e|$, so $\norm{x_n}\ge \frac{D_n}{nD_{n-1}^2}|x_n\cdot e|$.\\
	Now for every $m\in\N$ we can take $p(mx_n)=b_1x_1+\ldots +b_{n-1}x_{n-1}+c_1x_1+\ldots +c_{n-1}x_{n-1}$ with $b_i\in\Z$ and $|c_i|\le\frac{1}{2}$ for every $i$.
	What we are looking for is an $m$ such that $c_1,\ldots,c_{n-1}$ are close to zero. In that case $p(mx_n - b_1x_1 - \ldots - b_{n-1}x_{n-1})$ is small.\\
	To make this precise: for every $i<n$ there exists a $k_i\in\N$ such that $c_i\in\left[\frac{k_i}{4n^2D_{n-1}^3},\frac{k_i+1}{4n^2D_{n-1}^3}\right]$, with $k_i$ between $-2n^2D_{n-1}^3$ and $2n^2D_{n-1}^3-1$. Now due to the pigeonhole principle there will be an $m,m'\le (4n^2D_{n-1}^3)^{n-1}$ with $k_i=k_i'$ for every $i$.
	Now $(m-m')x_n$ will be the vector we are looking for, because $c_i'-c_i\in\left[\frac{-1}{4n^2D_{n-1}^3},\frac{1}{4n^2D_{n-1}^3}\right]$.  As $x_1$ is the smallest vector in $N$ we can make the following computation:
	\begin{eqnarray*}
	\norm{x_1}^2 & \le & \norm{(b_1-b_1')x_1+\ldots+(b_{n-1}-b_{n-1}')x_{n-1}+(m'-m)x_n}^2\\
	& = & \norm{(b_1-b_1')x_1+\ldots+(b_{n-1}-b_{n-1}')x_{n-1}+p(m'x_n)-p(mx_n)}^2 + |m'-m|^2|x_n\cdot e|^2\\
	& \le & \left(\sum_{i=1}^{n-1}\norm{(c_i'-c_i)x_i}\right)^2 + (4n^2D_{n-1}^3)^{2n-2}\frac{n^2D_{n-1}^4}{D_n^2}\norm{x_n}^2\\
	& \le & \left(\sum_{i=1}^{n-1}\frac{\norm{x_i}}{4n^2D_{n-1}^3}\right)^2 + \left(\frac{(4n^2D_{n-1}^3)^n}{4nD_{n-1}D_n}\right)^2\norm{x_n}^2\\
	& \le & \left(\frac{(n-1)\norm{x_n}}{4n^2D_{n-1}^3}\right)^2 + \frac{1}{8n^2D_{n-1}^6}\norm{x_n}^2\\
	& < & \frac{1}{4n^2D_{n-1}^6}\norm{x_n}^2\\
	\end{eqnarray*}
	However, this contradicts the earlier results that $\displaystyle\max_{i}|a_i|\le 2nD_{n-1}^2$ and $\displaystyle\min_i\norm{a_ix_i}\ge\norm{a_nx_n}$ \[\norm{x_1}=\frac{1}{|a_1|}\norm{a_1x_1}\ge\frac{1}{nD_{n-1}^2|a_n|}\displaystyle\min_i\norm{a_ix_i}\ge\frac{1}{2nD_{n-1}^3}\frac{1}{|a_n|}\norm{a_nx_n}\ge\frac{1}{2nD_{n-1}^3}\norm{x_n}.\]
	So for every $a_1,\ldots,a_n\in\R$ we have $\norm{a_1x_1+\ldots+a_nx_n} \ge \frac{1}{D_n}\displaystyle\max_i\{\|a_ix_i\|\}$.
\end{proof}
As mentioned earlier this lemma will help us to control the diameter of $\Z^n/(N\cap\Z^n)$. However we need to control the diameter of $H/N$. We will show that $H$ must be $\Z^n$-by-finite and then we will consider $\Z^n/(N\cap \Z^n)$. While it is not necessarily true that $\diam{H/N)}\ge\diam{\Z^n/(N\cap \Z^n)}$, it is true up to a constant.

\begin{lemma}\label{diam}
	Let $G$ and $H$ be two groups such that $H$ is $G$-by-finite. Then there exists a constant $C$ such that $C\diam{H/N)}\ge\diam{G/(N\cap G)}$ for every $N\lhd H$.
\end{lemma}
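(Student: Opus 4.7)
The plan is to factor the comparison through the intermediate quotient $H/(N\cap G)$, which is a group since $N\cap G$ is normal in $H$ (as the intersection of two normal subgroups). The natural surjection $H/(N\cap G)\twoheadrightarrow H/N$ has kernel $A:=N/(N\cap G)\cong NG/G$, a subgroup of $H/G$, so $|A|\le k:=[H:G]$. I will chain three successive bounds: a quasi-isometric comparison between $G/(N\cap G)$ and $H/(N\cap G)$; a modification argument relating $H/(N\cap G)$ to $H/N$ up to a diameter-of-$A$ error; and a bound on that diameter in terms of $\diam{H/N}$.

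For the first bound, the inclusion $(G,S_G)\hookrightarrow(H,S_H)$ is a quasi-isometry because $[H:G]<\infty$, yielding constants $M_1,M_2$ with $|g'|_{S_G}\le M_1|g'|_{S_H}+M_2$ for every $g'\in G$. Applying this to $g'=gn$ with $g\in G$ fixed and minimising over $n\in N\cap G$ gives $d_{G/(N\cap G)}(e,g(N\cap G))\le M_1\,d_{H/(N\cap G)}(e,g(N\cap G))+M_2$. For the second bound, pick $n_0\in N$ realising $|gn_0|_{S_H}=d_{H/N}(e,gN)$; for any $n\in N\cap G$, $gn=(gn_0)(n_0^{-1}n)$ with $n_0^{-1}n$ ranging over the coset $n_0^{-1}(N\cap G)\subset N$, which corresponds to an element of $A$ in $H/(N\cap G)$. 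Minimising over $n$ yields $d_{H/(N\cap G)}(e,g(N\cap G))\le d_{H/N}(e,gN)+\diam[H/(N\cap G)]{A}$.

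The crux is to show $\diam[H/(N\cap G)]{A}\le(k-1)(2\diam{H/N}+1)$. Every element of $A$ arises as the image of some closed walk at $e$ in $\Cay(H/N,S_H)$, since the product of labels of such a walk lies in $N$ and hence defines an element of $A$ modulo $N\cap G$. Fixing a BFS spanning tree of $\Cay(H/N,S_H)$ rooted at $e$, each fundamental cycle has length at most $2\diam{H/N}+1$, and the fundamental cycles generate $\pi_1(\Cay(H/N,S_H),e)$, which therefore surjects onto $A$. Since $|A|\le k$, at most $k-1$ of these cycles already generate $A$, and every element of the finite group $A$ is then a product of at most $|A|-1\le k-1$ such generators, giving an $S_H$-word in $H/(N\cap G)$ of length at most $(k-1)(2\diam{H/N}+1)$. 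Chaining the three bounds yields $\diam{G/(N\cap G)}\le C\diam{H/N}$ for a suitable $C$; the main obstacle is the third step, where one has to exploit the bounded size of $A$ together with the spanning-tree structure of $\Cay(H/N,S_H)$ to realise every element of $A$ by a loop of length $O(\diam{H/N})$, whereas an \emph{a priori} element of $A$ could be the image of an arbitrarily long loop.
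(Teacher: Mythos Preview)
Your argument is correct and follows a genuinely different route from the paper's. Both begin with the quasi-isometric comparison $\diam_{S_G}(G/(N\cap G))\lesssim\diam_{S_H}(G/(N\cap G))$, but diverge thereafter. The paper bounds $\diam_{S_H}(G/(N\cap G))$ by a direct pigeonhole: along an $H$-geodesic from $e$ to a shortest representative $g$ of a farthest coset it places $[H:G]+1$ roughly equispaced points $b_0,\dots,b_{|F|}$, moves each $b_i$ to some $n_i\in N$ within distance $\diam(H/N)$, finds two $n_i,n_j$ in the same $G$-coset, and observes that $x=n_jn_i^{-1}\in N\cap G$ would shorten $g$ unless the segment $b_i$--$b_j$ is short; minimality of $g$ then forces $|g|_H\le 3[H:G]\,\diam(H/N)$. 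Your approach instead isolates the obstruction as the bounded-order kernel $A=N/(N\cap G)$ and manufactures short representatives for every element of $A$ via fundamental cycles of a BFS spanning tree in $\Cay(H/N,S_H)$. The paper's argument is more elementary, needing no mention of $\pi_1$ or spanning trees; yours is more structural and makes transparent \emph{why} the constant depends only on $[H:G]$---namely because the covering $\Cay(H/(N\cap G))\to\Cay(H/N)$ has deck group of order at most $[H:G]$, hence is generated by loops of length $\le 2\diam(H/N)+1$ and has Cayley diameter at most $[H:G]-1$ in those generators. Both routes produce a constant $C$ of order $[H:G]$.
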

\begin{proof}
	Due to Proposition 2 of \cite{Ana}, there exists a $C'$ such that $\diam[G]{G/(N\cap G)}\le C'\diam[H]{G/(N\cap G)}$ for every $N\in H$. So it suffices to show that there exists a $C$ such that  $\diam[H]{G/(N\cap G)}\le C\diam{H/N}$. As $H$ is $G$-by-finite we can take $F=H/G$ finite and set $C=3|F|$.\\
	We can take $g\in G$ such that $|g|_H=|gN|_{H/N}=\diam[H]{G/(N\cap G)}$. Now take a path between $1$ and $g$ and take $1=b_0,b_1,\ldots,b_{|F|}=g$ on this path with $d_H(b_i,b_{i+1})\ge\left\lceil\frac{|g|_H}{|F|}\right\rceil$. Then for every $i$ there exists an $n_i\in N$ such that $d_H(b_i,n_i)\le\diam{H/N}$. As we have $|F|+1$ elements $n_i$, there will be two indices $i<j$ such that $n_i$ and $n_j$ lie in the same coset of $G$. So there exists an $x\in G\cap N$ such that $n_j=xn_i$. Now we can make the following computation:
	\begin{eqnarray*}
	|g|_H & \le & d_H(x,g)\\
	& \le & d_H(x,xb_i) + d(xb_i,xn_i) + d(n_j,b_j) + d(b_j,g)\\
	& \le & d_H(1,b_i) + d(b_j,g) + d(b_i,n_i) + d(n_j,b_j)\\
	& \le & |g|_H - d_H(b_i,b_j) + d(b_i,n_i) + d(n_j,b_j)\\
	& \le & |g|_H - \left\lceil\frac{|g|_H}{|F|}\right\rceil + 2\diam{H/N}
	\end{eqnarray*}
	So $\displaystyle\frac{|g|_H}{|F|}\le 2\diam{H/N}+1\le 3\diam{H/N}$, which proves the lemma.
\end{proof}
Now we can calculate the amount of intersections $N\cap\Z^n$ we can have such that $\diam{H/N}\le k$.
\begin{lemma}\label{growH}
	Let $H$ be $2$-generated and $\Z^n$-by-finite with \Alln. Then $\#\{N\cap\Z^n\mid N\lhd H, \diam{H/N}\le k\}=\O\left(k^{n^2-1}\right)$.
\end{lemma}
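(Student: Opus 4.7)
The plan is to reduce the problem to counting $F$-invariant sublattices of $\Z^n$ of bounded diameter, where $F := H/\Z^n$ is finite, and to use the $2$-generator hypothesis on $H$ to save at least one factor of $k$ beyond the unrestricted bound $\O(k^{n^2})$.

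For $N \lhd H$ with $\diam{H/N} \le k$, set $L = N \cap \Z^n$. By \cref{diam}, $\diam{\Z^n/L} \le Ck$ for a constant $C$ depending only on $H$; and since $N$ is normal in $H$, the sublattice $L$ is stable under conjugation by $H$, which factors through $F$. The $2$-generator hypothesis forces the torsion-free rank of $H^{\text{ab}}$ to be at most $2$. A standard computation with the low-degree exact sequence for $1 \to \Z^n \to H \to F \to 1$ gives that this rank equals $\dim_{\Q}(\Q^n)^F$, so this fixed-subspace dimension is at most $2$; in particular, since $n \ge 3$, the action of $F$ on $\Q^n$ is non-trivial.

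Now pick a minimal generating set $\{x_1, \ldots, x_n\}$ of $L$ as in \cref{bound}. Adapting the argument of \cref{growZ} but using \cref{bound} in place of the elementary near-orthogonality there, the diameter bound $\diam{\Z^n/L}\le Ck$ forces $\|x_i\| \le A k$ for all $i$ and some constant $A$, giving at most $\O(k^{n^2})$ candidate tuples. Next, decompose $\Q^n = U \oplus W$ as a $\Q[F]$-module, with $U = (\Q^n)^F$ of dimension $u \le 2$ and $W$ an $F$-complement with no fixed vectors. Up to a controlled amount of gluing data, $L$ is determined by $L \cap U$ (a sublattice of $U \cap \Z^n$) and $L \cap W$ (an $F$-invariant sublattice of $W \cap \Z^n$). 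Sublattices of $U \cap \Z^n$ of bounded diameter contribute $\O(k^{u^2}) \le \O(k^4)$, and the $F$-invariant sublattices of $W \cap \Z^n$ of bounded diameter should be bounded by $\O(k^{(n-u)^2 - 1})$, yielding the combined bound $\O(k^{u^2 + (n-u)^2 - 1}) \le \O(k^{n^2 - 1})$.

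The main obstacle is this last bound on $F$-invariant sublattices of $W \cap \Z^n$. Rigorously showing that $F$-invariance, combined with the fact (a consequence of $H$ being $2$-generated) that $\Z^n$ is a $2$-generated $\Z[F]$-module, cuts the unrestricted count $\O(k^{(n-u)^2})$ by at least one factor of $k$ is essential — without it, one would fail to exclude cases like $F = C_2$ acting by $-I$, where every sublattice is automatically $F$-invariant yet the corresponding $H$ is not $2$-generated for $n \ge 2$. A natural approach is an isotypic decomposition of $W$ as a $\Q[F]$-module and a case analysis over the endomorphism algebras of the irreducible components, together with careful tracking of the gluing data between the $U$- and $W$-parts.
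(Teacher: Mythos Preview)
Your proposal is incomplete, and you say so yourself: the bound $\O\!\left(k^{(n-u)^2-1}\right)$ on $F$-invariant sublattices of $W\cap\Z^n$ is the entire substance of the lemma, and you have only named an approach (isotypic decomposition, case analysis on endomorphism algebras) without carrying it out. The ``controlled amount of gluing data'' between the $U$- and $W$-parts is likewise not accounted for; since $L$ is generally not equal to $(L\cap U)\oplus(L\cap W)$, this gluing can contribute further polynomial factors in $k$, and you have not shown these are absorbed by the slack in $u^2+(n-u)^2\le n^2$.

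The paper bypasses both difficulties with a much more direct argument that never decomposes $\Q^n$ as an $F$-module. It fixes a minimal generating set $x_1,\dots,x_n$ of $L=N\cap\Z^n$ with $\|x_1\|\ge\dots\ge\|x_n\|$ and uses \cref{bound} twice. First, as you do, to get $\|x_i\|=\O(k)$. Second --- and this is the step replacing your missing sublattice count --- it takes a single automorphism $\alpha_h\ne\pm\Id$ coming from conjugation (whose existence is asserted to follow from $H$ being $2$-generated with $n\ge3$; this is exactly what rules out your $C_2$-acting-by-$(-\Id)$ example), writes $\alpha_h(x_n)=a_1x_1+\dots+a_nx_n$, and applies \cref{bound} to $\alpha_h(x_n)$ to conclude $|a_i|\le D_n\|\alpha_h\|$, a bound independent of $k$. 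For each of the $\O(1)$ possible tuples $(a_1,\dots,a_n)$ there are then at most $\O\!\left(k^{n^2-1}\right)$ choices of $(x_1,\dots,x_n)$: if some $a_i\ne0$ with $i<n$ then $x_i$ is determined by the remaining $x_j$, while if $a_1=\dots=a_{n-1}=0$ then $a_n=\pm1$ and $x_n$ must lie in the proper subspace $\ker(\alpha_h\mp\Id)$, cutting its choices to $\O(k^{n-1})$. Summing over the bounded set of tuples gives the result with no need to understand $F$-invariant lattices in general.
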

\begin{proof}
	Due to \cref{diam} it suffices to show that $\#\{N\cap\Z^n\mid N\lhd H, \diam{\Z^n/(N\cap\Z^n)}\le k\}=\O\left(k^{n^2-1}\right)$. So take $N\lhd H$ such that $\diam{\Z^n/(N\cap\Z^n)}\le k$. Then we can take $N\cap\Z^n$ generated by $\{x_1,\ldots,x_n\}$ as in \cref{bound} and without loss of generality we can assume $\norm{x_1}\ge\ldots\ge\norm{x_n}$. Now for every vector $x\in\R^n$ we have $d(x,\Z^n)\le\frac{\sqrt{n}}{2}$, in particular we have $d\left(\frac{x_1}{2},\Z^n\right)\le\frac{\sqrt{n}}{2}$. So we can make the following computation:
	\begin{align*}
	k + \frac{\sqrt{n}}{2} & \ge \, \diam{\Z^n/(N\cap\Z^n)} + \frac{\sqrt{n}}{2}\\
	& \ge \, d\left(\frac{x_1}{2}+N,0+N\right)\\
	& = \, \inf_{a_1,\ldots,a_{n}\in\Z}\left(\norm{\left(\frac{1}{2}+a_1\right)x_1+a_2x_2 +\ldots+a_nx_n}\right)\\
	& \ge \, \frac{1}{D_n}\inf_{a_1}\left(\left|\frac{1}{2}+a_1\right|\,\norm{x_1}\right) & \text{by \cref{bound}}\\
	& = \, \frac{1}{2D_n}\norm{x_1}.
	\end{align*}
	We can conclude that  $2D_nk+D_n\sqrt{n}\ge\norm{x_1}\ge\ldots\ge\norm{x_n}$. So for any $i$ we have that $x_i$ lies within $\left[-D_n(2k+\sqrt{n}),D_n(2k+\sqrt{n})\right]^n$.\\
	As $H$ is $2$-generated there is an $\alpha_h\in\Aut(\Z^n)$ different from $\pm\Id$, with $\alpha_h(x)=hxh^{-1}$ where $h\in H$. Note that $\alpha_h$ is of finite order and note that $N\cap\Z^n$ is $\alpha_h$-independent. So there exist $a_i$ such that $\alpha_h(x_n)=a_1x_1+\ldots+a_nx_n$. Note that $\alpha_h$ is an bounded operator on $\R^n$, which allows the following computation:
	\begin{eqnarray*}
	\norm{\alpha_h}\norm{x_n} & \ge & \norm{a_1x_1+\ldots+a_nx_n}\\
	& \ge & \frac{1}{D_n}\max_i\{\norm{a_ix_i}\}\\
	& \ge & \frac{1}{D_n}\max_i\{|a_i|\}\norm{x_n}.
	\end{eqnarray*}
	So $D_n\norm{\alpha_h}\ge\displaystyle\max_i\{|a_i|\}$.\\
	Now we still have to count the different possibilities for $N$. There are fewer of these than the different possibilities for $x_1,\ldots,x_n$, as different subgroups have different generators. Note that every possibility of $x_1,\ldots,x_n$ admits values of $a_1,\ldots,a_n$ associated to $\alpha_h$.
	\\\\
	Now we will show that for any given a sequence $a_1,\ldots,a_n$, the number of $x_1,\ldots,x_n$ satisfying earlier conditions is bounded by $(4D_nk+2D_n\sqrt{n}+1)^{n^2-1}$. As the number of possibilities for any $a_i$ is bounded by $2D_n\norm{\alpha_h}$, the total number of possibilities for $x_1,\ldots,x_n$ is bounded by $(2D_n\norm{\alpha_h})^n(4D_nk+2D_n\sqrt{n}+1)^{n^2-1}=\O(k^{n^2-1})$. These earlier conditions are $D_n\norm{\alpha_h}\ge\displaystyle\max_i\{|a_i|\}$, $2D_nk+D_n\sqrt{n}\ge\norm{x_1}\ge\ldots\ge\norm{x_n}$ and $\alpha_h(x_n)=a_1x_1+\ldots+a_nx_n$.\\
	If there is an $i<n$ such that $a_i\not=0$, then $x_i$ can be deduced from all other $x_j$. So the number of possibilities of $x_1,\ldots,x_n$ is bounded by $\left(4D_nk+2\sqrt{n}D_n+1\right)^{(n-1)n}$.\\
	If for every $i<n$ we have $a_i=0$, then $a_n=\pm1$, because otherwise $\alpha_h$ is not an automorphism. Since $\alpha_h\not=\pm\Id$ we know that $\{x\in\R^n\mid\alpha_h(x)=a_nx\}$ is not the entirety of $\R^n$. Therefore it is at most an $(n-1)$-dimensional subspace of $\R^n$, which reduces the possibilities for $x_n$ to at most  $\left(4D_nk+2D_n\sqrt{n}+1\right)^{n-1}$, while the possibilities of other $x_1,\ldots,x_{n-1}$ is bounded by $\left(4D_nk+2D_n\sqrt{n}+1\right)^{(n-1)n}$. Therefore the total number of possibilities in this case is also bounded by $\left(4D_nk+2D_n\sqrt{n}+1\right)^{n^2-1}$.
	\\\\
	In conclusion we have that for any fixed sequence $a_1,\ldots,a_n$ the number of possibilities of $x_1,\ldots,x_n$ is bounded by $\left(4D_nk+2D_n\sqrt{n}+1\right)^{n^2-1}$.
	So the total number of possibilities for $x_1,\ldots,x_n$ is bounded by $\left(D_n\norm{\alpha_h}\right)^n\left(4D_nk+2D_n\sqrt{n}+1\right)^{n^2-1}$.
	Therefore the possibilities of $N\cap\Z^n$ is bounded by that same number, which means $\#\{N\cap\Z^n\mid N\lhd H, \diam{H/N}\le k\}=\O\left(k^{n^2-1}\right)$.
\end{proof}
Now every intersection $\Z^n\cap N$ can be realized by multiple normal subgroups $N\lhd H$. However this amount is bounded. We give the following improved version of our original proposition, due to Alain Valette.
\begin{proposition}[A.Valette]\label{V}
     let $H$ be a finite group with a normal abelian subgroup $A$ generated by $n\geq 1$ elements, and with index $d=[H:A]$. Let $S(H,A)$ be the set of normal subgroups $N\triangleleft H$ such that $N\cap A=\{1\}$. Then $|S(H,A)|$ is bounded above by a function only depending on $n$ and $d$.
\end{proposition}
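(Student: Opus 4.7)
The plan is to exploit the observation that any $N\in S(H,A)$ must centralise $A$. Since both $N$ and $A$ are normal in $H$ and meet trivially, the commutator satisfies $[N,A]\subseteq N\cap A=\{1\}$, so $N$ is contained in the centraliser $C:=C_H(A)$, and of course $[C:A]\le[H:A]=d$.

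With $A$ central in $C$, the subgroup $NA\subseteq C$ decomposes internally as $N\times A$, so $N$ is a complement to $A$ in $NA$. This reduces the problem to two enumeration tasks: (i) counting the possible subgroups $B:=NA/A$ of $C/A$, and (ii) for each such $B$, counting complements of $A$ in the split central extension $1\to A\to NA\to B\to 1$.

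For (i), $|C/A|\le d$ gives the trivial bound $2^d$ on the number of subgroups $B$. For (ii), once one complement exists the set of all complements is a torsor under $Z^1(B,A)=\operatorname{Hom}(B,A)$, since the action of $B$ on $A$ is trivial. Any homomorphism $B\to A$ factors through $B^{\mathrm{ab}}$ and takes values in the $\exp(B)$-torsion of $A$, hence in $A[d]$; since $A$ is abelian on $n$ generators, one has $|A[d]|\le d^n$; and since $|B|\le d$, we obtain $|\operatorname{Hom}(B,A)|\le d^{nd}$.

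Combining these estimates would yield $|S(H,A)|\le 2^d\cdot d^{nd}$, a bound depending only on $n$ and $d$. The only conceptually non-routine step is the opening commutator observation that forces $N\subseteq C_H(A)$; everything after it is elementary counting in a central extension, and it is the rank bound on $A$ (rather than any bound on $|A|$ itself) that keeps $|\operatorname{Hom}(B,A)|$ under control even when $A$ is allowed to be large.
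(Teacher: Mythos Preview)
Your argument is correct and follows the same route as the paper: bound the image $NA/A$ in $H/A$ by $2^d$, then bound the number of complements of $A$ in $NA$ by $d^{nd}$ using that the values must lie in the $d$-torsion of $A$ and $|A[d]|\le d^n$. The only cosmetic differences are that you make the commutator identity $[N,A]\subseteq N\cap A=\{1\}$ explicit (the paper uses it implicitly when asserting $N_1A\cong N_1\times A$) and that you phrase the complement count cohomologically as a torsor under $\operatorname{Hom}(B,A)$ rather than as graphs of maps $N_1\to A$; the resulting bound $2^d\cdot d^{nd}$ is identical.
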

\begin{proof}
    Indeed let $\pi: H\rightarrow H/A$ be the quotient map. For $N_1\in S(H,A)$, since $\pi|_{N_1}$ is injective, there are (very crudely) at most $2^d$ possibilities for $\pi(N_1)$. 
    \\
    Now we estimate how many $N_2\in S(H,A)$ are such that $\pi(N_1)=\pi(N_2)$. The subgroup $N_1A$ is isomorphic to the direct product $N_1\times A$, we write its elements as pairs $(n_1,a)$. Now since $N_2A=N_1A$ we may view $N_2$ as the graph of a map $\alpha:N_1\rightarrow A$ (we identify $\pi$ on $N_1\times A$ with the projection on the first factor). So we write $N_2=\{(g,\alpha(g)): g\in N_1\}$ and $N_1\rightarrow N_2: g\mapsto(g,\alpha(g))$ is an isomorphism.
    \\
    Fixing $g\in N_1$, we estimate the number of possibilities for $\alpha(g)$. Since $g^d=1$, we must have $\alpha(g)^d=1$ in $A$. So we must bound $d$-torsion in $A$. 
    \\
    By the theory of elementary divisors, there exist integers $f_1,\ldots,f_k$, with $f_i|f_{i+1}$, such that $A\simeq \bigoplus_{i=1}^k \mathbb{Z}/f_i\mathbb{Z}.$ We have $k\leq n$ as $A$ is $n$-generated. Now there are at most $d$ elements of $d$-torsion in a cyclic group (by uniqueness of subgroups). So there are at most $d^k$ elements of $d$-torsion in $A$. So the number of possibilities for $\alpha(g)$ is at most $d^k\leq d^n$.
    \\
    Therefore the number of possibilities for $N_2$ is at most $(d^n)^{|N_2|}\leq d^{nd}$. Finally we have $|S(H,A)|\leq 2^d\cdot d^{nd}$.
\end{proof}
\begin{corollary}\label{growext}
	Let $H$ be $\Z^n$-by-finite for some \Alln. Then there exists a $C>0$ such that for every $\mathcal{N}\lhd\Z^n$ of finite index the set $\#\{N\lhd H\mid N\cap\Z^n=\mathcal{N}\}\le C$.
\end{corollary}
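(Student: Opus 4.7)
The plan is to reduce the counting problem to Proposition~\ref{V} by passing to the finite quotient $H/\mathcal{N}$. The key observation is that Proposition~\ref{V} yields a bound depending only on the number of generators $n$ of the abelian part and the index $d=[H:\Z^n]$, neither of which varies with $\mathcal{N}$, so it will supply the uniform constant $C$ we need.

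First I would note that we may restrict attention to those $\mathcal{N}$ which are normal in $H$: if $N\lhd H$ and $N\cap\Z^n=\mathcal{N}$, then since $\Z^n\lhd H$ we have $\mathcal{N}=N\cap\Z^n\lhd H$, so otherwise the set to be counted is already empty. Next, since $\mathcal{N}$ has finite index in $\Z^n$ and $\Z^n$ has finite index in $H$, the quotient $\bar{H}:=H/\mathcal{N}$ is finite. Set $\bar{A}:=\Z^n/\mathcal{N}$; this is an abelian normal subgroup of $\bar{H}$, generated by the images of the $n$ generators of $\Z^n$, with $[\bar{H}:\bar{A}]=[H:\Z^n]=d$.

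By the correspondence theorem, normal subgroups $N\lhd H$ containing $\mathcal{N}$ are in bijection with normal subgroups $\bar{N}\lhd\bar{H}$ via $N\mapsto N/\mathcal{N}$, and under this bijection the condition $N\cap\Z^n=\mathcal{N}$ translates to $\bar{N}\cap\bar{A}=\{1\}$. Therefore
\[\#\{N\lhd H\mid N\cap\Z^n=\mathcal{N}\}=|S(\bar{H},\bar{A})|,\]
and Proposition~\ref{V} bounds the right-hand side by $2^{d}\cdot d^{nd}$, a quantity depending only on $H$ and not on $\mathcal{N}$. Taking $C:=2^{d}\cdot d^{nd}$ finishes the argument. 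There is no serious obstacle here; the only point worth checking carefully is that any $N\lhd H$ with $N\cap\Z^n=\mathcal{N}$ automatically contains $\mathcal{N}$, which is what legitimises the invocation of the correspondence theorem.
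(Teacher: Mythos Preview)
Your argument is correct and follows exactly the paper's approach: the paper's proof is the single line $\#\{N\lhd H\mid N\cap\Z^n=\mathcal{N}\} = |S(H/\mathcal{N},\Z^n/\mathcal{N})|$, and you have simply unpacked this identification carefully via the correspondence theorem and then read off the uniform bound $2^{d}\cdot d^{nd}$ from Proposition~\ref{V}. The extra observations you make (that $\mathcal{N}$ is automatically normal in $H$ and contained in $N$) are exactly the checks needed to justify that identification.
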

This is an easy consequence of \cref{V} as $\#\{N\lhd H\mid N\cap\Z^n=\mathcal{N}\} = |S(H/\mathcal{N},\Z^n/\mathcal{N})|$.
Now combining \cref{growH} and \cref{growext} we can control the diameter growth of $H$, which suffices to prove \cref{mainZn}.
\\
In the proof of \cref{mainZn} we will use a generalized version of Theorem 7 of \cite{AA}. We will essentially find two coarsely equivalent sequences of groups that each converge to a group in the space of marked groups. Now by combining \cref{AA1} and Proposition 3 in \cite{AA} we find that these two groups are quasi-isometric.

\begin{proof}[Proof of \cref{mainZn}]
	Suppose there is a coarse equivalence $\Phi$ between $\Boxf H$ and $\Boxf \Z^n$, with $H$ $2$-generated.\\
	We may assume that $H$ is residually finite, because if $H$ is not residually finite, i.e. $\displaystyle\bigcap_{N\lhd H}N\not=\{1\}$, then $\Boxf H = \Boxf H/\displaystyle\bigcap_{N\lhd H}N$ and $H/\displaystyle\bigcap_{N\lhd H}N$ is residually finite. Note that $H$ is still $2$-generated.\\
	Now due to \cref{AA1} there is an almost permutation $\phi$ between the components of $\Boxf H$ and the components of  $\Boxf \Z^n$, where $\Phi|_{X}$ is a quasi-isometry between $X$ and $\phi(X)$ for every component $X$ of $\Boxf H$ in the domain of $\phi$. Since $H$ is residually finite, there is a box space $\Box_{(N_k)}H$ contained in $\Boxf H$. Via $\phi$ this corresponds to a subspace $\displaystyle\coprod_{k}\Z^n/M_k$ of $\Boxf \Z^n$. Now this sequence $\left(\Z^n/M_k\right)_k$ has a subsequence that is constant on bigger and bigger balls, i.e.\ there exists a sequence $k_r$ such that $k_r\to\infty$ as $r\to\infty$ and for every $k,k'\ge k_r$ in this subsequence we have $M_k\cap B[1,r]=M_{k'}\cap B[1,r]$. Now due to a generalized version of Theorem 7 of \cite{AA} $H$ is quasi-isometric a quotient of $\Z^n$, because the intersection of the subsequence $M_k$ converges to a normal subgroup of $\Z^n$. So $H$ is virtually $\Z^m$ with $m\le n$, due to the quasi-isometric rigidity of $\Z^m$.\\
	Due to \cref{growH} we have $\#\{N\cap\Z^n\mid N\lhd H, \diam{H/N}\le k\}=\O\left(k^{m^2-1}\right)$ and due to \cref{growext} we have $\#\{N\lhd H\mid \diam{H/N}\le k\}=\O\left(k^{m^2-1}\right)$. However due to \cref{corpoly} we have that $\#\{N\lhd\Z^n\mid \diam{\Z^n/N}\le k \}=\O\left(k^{m^2-1}\right)$, but as $m\le n$ this is in contradiction with \cref{growZ}.
\end{proof}

\bibliographystyle{alpha}
\bibliography{bib.bbl}

\begin{thebibliography}{Khu12}

\bibitem[Khu12]{Ana}
A.~Khukhro.
\newblock {Box spaces, group extensions and coarse embeddings into Hilbert
  space}.
\newblock {\em Journal of Functional Analysis}, 263(1):115--128, 2012.

\bibitem[KV15]{AA}
Ana Khukhro and Alain Valette.
\newblock Expanders and box spaces.
\newblock {\em arXiv preprint arXiv:1509.01394}, 2015.

\bibitem[LS12]{lub}
Alexander Lubotzky and Dan Segal.
\newblock {\em Subgroup growth}, volume 212.
\newblock Birkh{\"a}user, 2012.

\end{thebibliography}
\vspace{\fill}
\textbf{Author's address:\\}
Institut de Math\'ematiques\\
UniMail\\
11 Rue Emile Argand\\
CH-2000 Neuch\^atel - SWITZERLAND\\
\href{mailto:thiebout.delabie@unine.ch}{thiebout.delabie@unine.ch}

\end{document}